\theoremstyle{definition}
\newtheorem{thm}{Theorem}
\newtheorem{lem}[thm]{Lemma}
\newtheorem{defn}[thm]{Definition}
\title{On Radial Colorings of Annuli}
\author{Jeremy F. Alm\footnote{Corresponding author.}\\ Illinois College\\ Jacksonville, IL 62650\\ \texttt{alm.academic@gmail.com} \and Jacob Manske\\ Texas State University\\ San Marcos, TX 78666\\ \texttt{jmanske@gmail.com}}
\date{\today}
\begin{document}
\maketitle
\thispagestyle{empty}


\begin{abstract}
  We consider the chromatic numbers of unit-distance graphs of various annuli. In particular, we consider radial colorings, which are ``nice" colorings, and completely determine the radial chromatic numbers of various annuli.
\end{abstract}


  \section{Introduction}\label{intro}

The \emph{Chromatic Number of the Plane}, denoted by $\chi(\mathbb{R}^2)$, is the least integer $N$ such that the points of the plane can be colored in $N$ colors such that no two points exactly unit distance apart are the same color.  Well-known elementary arguments show that $4\leq\chi(\mathbb{R}^2)\leq 7$, but no improvement on these bounds appears to be forthcoming.

The difficulty of improving these bounds has led several authors to
consider various modifications to the problem.  For instance, one
might restrict the type of coloring.  In \cite{Falconer81}, Falconer
shows that if the color classes must be measurable, then at least
five colors are required.  (See \cite{Soifer}, Ch.~9, for a nice
exposition.) In \cite{Townsend81}, Townsend announces the result
that every ``map-coloring" of the plane requires at least six
colors, and provides a proof in \cite{Townsend2005}. In a basically
identical result couched in different language, Coulson
\cite{Coulson04} considers colorings in which the color classes are
composed of ``tiles", and shows that six colors are required.

Another approach is to consider colorings of proper subsets of the
plane.  In \cite{Axenovichetal}, Axenovich et al. consider but do
not determine the chromatic number of $\mathbb{Q}\times\mathbb{R}$.
They also determine the chromatic number of infinite strips of
different widths, and of the union of two parallel lines.  In
\cite{Kruskal08}, Kruskal considers bounded simply connected subsets
of the plane.  In particular, he shows that for a closed disk of
radius $r$,
\begin{itemize}
  \item the disk is 2-colorable if and only if $r\leq\frac{1}{2}$;
  \item the disk is 3-colorable if and only if $r\leq\frac{1}{\sqrt{3}}$;
  \item the disk is 4-colorable if $r\leq\frac{1}{\sqrt{2}}$.
\end{itemize}

In the present paper, we consider the annulus with inner radius $\frac{1}{2}-r$ and outer radius $\frac{1}{2}+r$, where  $0<r<\frac{1}{2}$, and prove some surprising results in the case that the colorings are required to be ``nice."

\section{Arbitrary colorings of the annulus.}

Let $0<r<\frac{1}{2}$, and let $A_r=\{p\in\mathbb{R}^2:\frac{1}{2}-r\leq\|p\|\leq\frac{1}{2}+r\}$.  We recall two lemmas from Kruskal's paper \cite{Kruskal08}.
\begin{lem}\label{rod}
  A \emph{rod} is a line segment of unit length.  If a region $R$ is 2-colored, and we slide a rod continuously so that its endpoints stay within the interior of $R$, then the set of points passed over by a given endpoint is monochromatic.
\end{lem}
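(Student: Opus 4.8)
My plan is to exploit the single structural fact that a rod has unit length: its two endpoints are always exactly unit distance apart, so in any proper $2$-coloring they must receive opposite colors, and since only two colors are available the color of one endpoint determines that of the other throughout the slide. Fixing attention on one endpoint, say $A$, and parametrizing the slide by $t\in[0,1]$ with $A$ at $A(t)$ and the other endpoint at $B(t)$, the goal becomes to show that the color of $A(t)$ does not depend on $t$.

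I would prove this by establishing that $t\mapsto\mathrm{color}(A(t))$ is locally constant and then invoking connectedness of $[0,1]$: a locally constant function on a connected set is constant, so a finite cover of $[0,1]$ by overlapping intervals on each of which the color is constant forces the set swept by $A$ to be monochromatic. The engine for local constancy is a pivot argument. Given two nearby positions $P=A(t_0)$ and $Q=A(t)$ of the endpoint, I would produce a single point $X$ in the interior of $R$ with $\|X-P\|=\|X-Q\|=1$. Such an $X$ is at unit distance from both $P$ and $Q$, hence must differ in color from each; as there are only two colors, $P$ and $Q$ must then share a color. Geometrically $X$ is one of the two intersection points of the unit circles about $P$ and about $Q$, which exist and vary continuously as long as $0<\|P-Q\|<2$, a condition guaranteed for $t$ close to $t_0$ by continuity of the slide.

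The crux, and the step I expect to be the main obstacle, is ensuring that such a pivot $X$ actually lies in the interior of $R$ rather than outside the region; indeed, for a degenerate one-dimensional ``region'' the conclusion is simply false, so the two-dimensionality of the interior must be used. The point $B(t_0)$ is already a unit-neighbor of $P$ lying in the interior, and since the interior is open it contains a ball about $B(t_0)$; the difficulty is that the required pivot sits on the perpendicular bisector of $PQ$ and so may point in a direction away from $B(t_0)$. I would handle this by composing a bounded number of elementary rotations that each preserve the color of the tracked endpoint: rotating the rod about the fixed endpoint $B$ moves $A$ along an arc of the unit circle about $B$, every point of which is at unit distance from $B$ and hence of one fixed color, while rotating about the fixed endpoint $A$ repositions $B$ without moving $A$ at all. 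Alternating these two color-preserving moves lets me reorient the rod so that an admissible pivot in the needed direction falls inside a ball contained in the interior, and thus walk the tracked endpoint from $P$ to $Q$ through a short chain of such pivots. Verifying that these auxiliary arcs and pivots remain inside the interior for all sufficiently small $t-t_0$ — which is precisely where the openness of the interior and the room afforded by the slide enter — is the technical heart of the argument.
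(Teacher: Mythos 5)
The paper does not actually prove Lemma \ref{rod}; it is quoted from Kruskal \cite{Kruskal08}, so there is no in-paper argument to compare yours against. Judged on its own terms, your strategy is the right one: the pivot observation (a point of $R$ at unit distance from both $P$ and $Q$ forces $c(P)=c(Q)$ under a proper $2$-coloring), local constancy of $t\mapsto c(A(t))$, and connectedness of $[0,1]$ form exactly the skeleton of the standard proof. You also correctly diagnose the obstacle: the naive pivot, an intersection point of the unit circles about $P$ and $Q$, can land far from the rod and outside $R$. Indeed it always does when $Q-P$ points along the rod: with $P=(0,0)$, $Q=(\varepsilon,0)$, $B=(1,0)$, the two intersection points are near $(0,\pm 1)$, at distance about $\sqrt{2}$ from $B$.

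The gap is that the step carrying all the content is asserted rather than carried out. You claim that ``a bounded number of elementary rotations'' about alternating endpoints walks the tracked endpoint from $P$ to $Q$ with every auxiliary point staying inside the region, and you explicitly defer the verification; but that verification \emph{is} the lemma. What has to be checked, and why it works: fix $\delta$ so that the $\delta$-balls about $A(t_0)$ and $B(t_0)$ lie in the interior. A single pivot $X$ chosen on the unit circle about $A(t_0)$ inside the ball about $B(t_0)$ displaces the tracked endpoint by a vector of length at most $\delta$ whose direction lies within angle roughly $\delta$ of the perpendicular to the rod; this ``bowtie'' of reachable displacements is pinched at $A(t_0)$ and omits the along-the-rod direction entirely, which is precisely why one pivot does not suffice. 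Two successive pivots tilted to opposite sides of the perpendicular compose to give a net displacement in an arbitrary direction of magnitude up to roughly $\delta^{2}$, with the one intermediate point staying in the ball about $A(t_0)$; so every $Q$ with $\|Q-P\|<c\delta^{2}$ is joined to $P$ by a chain of two pivots lying in the two balls, and taking $\varepsilon=c\delta^{2}$ as the modulus of local constancy closes the argument. Note also that once you phrase this as a chain of pivots, the intermediate configurations need not be genuine rod positions arising from the slide at all; the slide is used only to guarantee, via continuity, that $A(t)$ lies within $\varepsilon$ of $A(t_0)$ for $t$ near $t_0$, and to supply the two reference balls. Until the two-pivot covering claim is proved, your argument is an accurate plan rather than a proof.
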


\begin{lem}\label{trirod}
  A \emph{tri-rod} is an equilateral triangle of unit side length.  If a region $R$ is 3-colored, and we slide a tri-rod continuously so that its vertices stay within the interior of $R$, then the set of points passed over by a given vertex is monochromatic.
\end{lem}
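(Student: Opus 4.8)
The plan is to mirror the proof of Lemma~\ref{rod}, upgrading its two-color ``pinning'' step to three colors. The starting observation is that the three vertices of a unit tri-rod are pairwise at distance $1$, so in any proper $3$-coloring of $R$ they must receive three \emph{distinct} colors; equivalently, at every instant the tri-rod realizes all three colors, one per vertex. Fixing attention on a single vertex, say $A$, I would parametrize the slide by $t\in[0,1]$ and let $\gamma(t)$ denote the color of $A$ at time $t$. Exactly as in Lemma~\ref{rod}, it then suffices to prove that $\gamma$ is \emph{locally constant}: since $[0,1]$ is connected, a locally constant color function is constant, and the locus swept by $A$ is therefore monochromatic.

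To establish local constancy I would decompose a small motion of the tri-rod into elementary pieces, just as a small rigid motion of a rod decomposes into rotations about its endpoints. A rigid equilateral triangle has a single rotational degree of freedom about a chosen vertex, so the basic move is to hold one vertex $C$ fixed and rotate, sending the other two vertices $A,B$ along short arcs of the unit circle $K$ centered at $C$. Because every point of $K$ lies at distance $1$ from $C$, it cannot carry $c(C)$; hence the entire swept arc is colored with the \emph{two} colors other than $c(C)$, with $c(A)\neq c(B)$ throughout. Chaining a short sequence of such rotations (about $C$, then about the new position of $A$, and so on) lets me connect any two sufficiently close admissible placements while keeping all auxiliary arcs in the interior of $R$; this is possible precisely because the vertices are assumed interior to $R$ (an open set) and the perturbation is small.

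The main obstacle is exactly the point at which the two-color argument fails to transcribe verbatim. In Lemma~\ref{rod} a fixed endpoint pins the moving endpoint to the \emph{unique} remaining color, forcing constancy at once. Here, fixing $C$ pins $A$ only to one of \emph{two} available colors, so a priori $A$ and $B$ could \emph{swap} colors as they slide along $K$: the purely one-dimensional constraint that points $60^{\circ}$ apart on $K$ differ is already satisfied by a sixfold alternating pattern around $K$. Ruling out such a swap is the crux, and I expect to do so by exploiting the full rigidity together with the two-dimensional room that the interior hypothesis supplies. Concretely, at a putative jump let $P_-$ and $P_+$ be nearby points of $K$ of opposite colors straddling the boundary; the two points at distance $1$ from both are $C$ itself and its reflection $Z'$ across the chord $P_-P_+$, and $Z'$ is forced to reproduce $c(C)$. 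Tracking this companion point as the tri-rod rotates should over-determine the coloring near $K$ and yield the same kind of continuity contradiction that powers Lemma~\ref{rod}, thereby forbidding the swap. Once the swap is excluded, $c(A)$—and by symmetry the color of each vertex—cannot change under a small motion, local constancy holds, and connectedness of $[0,1]$ completes the argument.
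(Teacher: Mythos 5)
First, a point of reference: the paper does not prove this lemma at all---both Lemma~\ref{rod} and Lemma~\ref{trirod} are quoted from Kruskal \cite{Kruskal08}---so there is no in-paper proof to compare yours against, and I am assessing the proposal on its own terms.

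Your skeleton is sound: the three vertices always carry three distinct colors, local constancy plus connectedness of $[0,1]$ suffices, and small motions decompose into small rotations about the vertices (this last step deserves a sentence---the infinitesimal rotations about three non-collinear points span the Lie algebra of the Euclidean group, which is what lets you chain them). You have also correctly isolated the one place where the two-color argument fails to transcribe: a rotation about $C$ pins the \emph{pair} $\{c(A),c(B)\}$ but not which vertex carries which color. The difficulty is that your mechanism for excluding the swap does not work. The companion point $Z'$---the reflection of $C$ across the short chord $P_{-}P_{+}$---sits at distance approximately $2$ from $C$, about one unit beyond the moving vertex on the far side of the circle $K$. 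It is not a vertex of any admissible tri-rod position, and the hypothesis places only the \emph{vertices} of the tri-rod in the interior of $R$; nothing places $Z'$ in $R$, so you cannot conclude $c(Z')=c(C)$, or even that $Z'$ is colored. This is not a removable technicality: in the very application the paper makes of the lemma, $R=A_{r}$ is an annulus of outer radius barely exceeding $1/\sqrt{3}$, and the point near $2Q-C$ typically lies at distance well over $1$ from the origin, far outside $A_{r}$. Moreover, even granting $c(Z')=c(C)$, the sentence ``tracking this companion point \ldots\ should over-determine the coloring \ldots\ and yield the same kind of continuity contradiction'' is a statement of intent, not an argument; no contradiction is actually derived. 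As you yourself note, the one-dimensional data on $K$ (points $60^{\circ}$ apart differ) is consistent with a sixfold alternating pattern, so the swap can only be excluded by genuinely exploiting the full two-dimensional freedom of the tri-rod---that is, the vertices of \emph{nearby admissible positions}, which sweep out open neighborhoods of the arcs---rather than an auxiliary point that may lie outside the region. That step is the entire content of the lemma beyond Lemma~\ref{rod}, and it is missing; consult Kruskal's paper \cite{Kruskal08} for how it is actually carried out.
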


If $r>0$, then a rod can be placed with endpoints interior to $A_r$.  By rotating the rod by $180^\circ$ about its center, we see that if $A_r$ were 2-colored, then by Lemma \ref{rod} the two endpoints would have to be colored the same color.  But the endpoints are unit distance apart, so $A_r$ is not 2-colorable.  See Figure \ref{fig:F1}.  An alternate method of proof is to embed an odd cycle.  See Figure \ref{fig:Fodd}.

\begin{figure}[htb!]
\centering
\includegraphics[width=140pt]{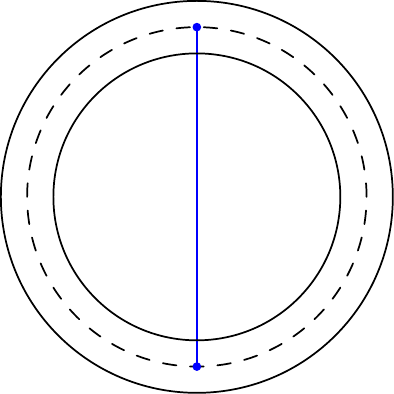}
\caption{An  embedding of a rod}
\label{fig:F1}
\end{figure}

\begin{figure}[htb!]
\centering
\includegraphics[width=140pt]{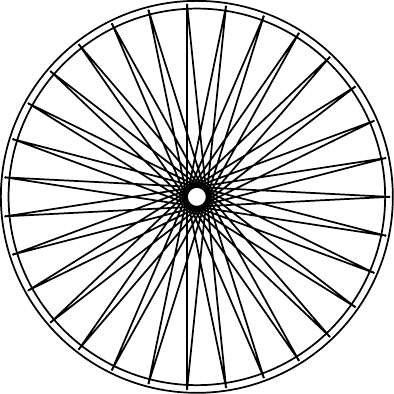}
\caption{An embedding of a odd cycle}
\label{fig:Fodd}
\end{figure}

%
%
%

If $0<r\leq\frac{2-\sqrt{3}}{2\sqrt{3}}$, then $A_r$ is 3-colorable.  Figure \ref{fig:F2} shows a proper 3-coloring of $A_r$ with $r=\frac{2-\sqrt{3}}{2\sqrt{3}}$ (so that the outer radius of $A_r$ is $\frac{1}{\sqrt{3}}$).  Each boundary between sectors is colored the same color as the sector clockwise from it.

\begin{figure}[htb!]
\centering
\includegraphics[width=140pt]{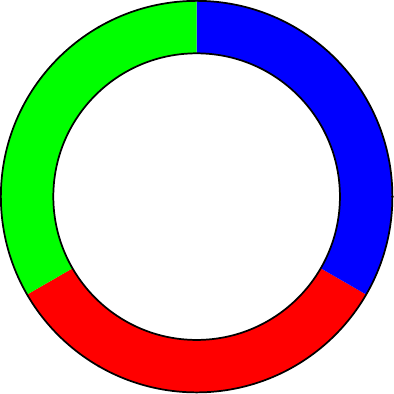}
\caption{A proper 3-coloring for $r=\frac{2-\sqrt{3}}{2\sqrt{3}}$}
\label{fig:F2}
\end{figure}

If $r>\frac{2-\sqrt{3}}{2\sqrt{3}}$, then the outer radius of $A_r$ is greater than $\frac{1}{\sqrt{3}}$, so a tri-rod can be placed such that its endpoints lie in the interior of $A_r$.  Thus by rotating $120^\circ$ and applying Lemma \ref{trirod}, we see that $A_r$ is not 3-colorable.  See Figure \ref{fig:F3}.

\begin{figure}[htb!]
\centering
\includegraphics[width=140pt]{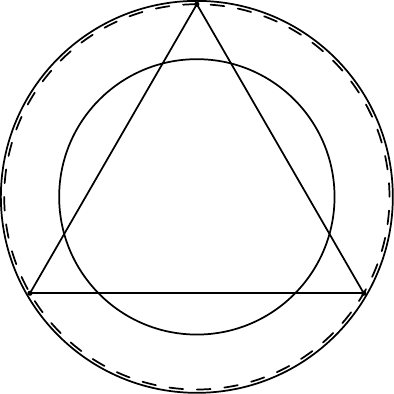}
\caption{An  embedding of a tri-rod}
\label{fig:F3}
\end{figure}

Figure \ref{fig:F4} shows a proper 4-coloring of $A_r$ for $r=\frac{2-\sqrt{2}}{2\sqrt{2}}$ (so that the outer radius of $A_r$ is $\frac{1}{\sqrt{2}}$).  Again, the boundaries between sectors are colored the same color as the adjacent sector in the clockwise direction.

\begin{figure}[htb!]
\centering
\includegraphics[width=140pt]{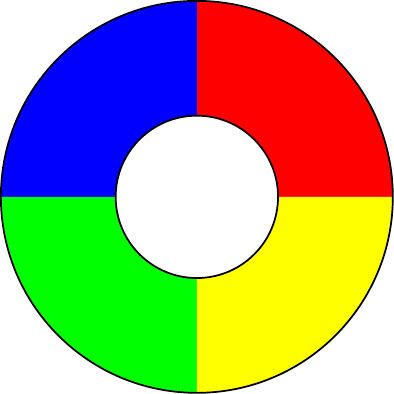}
\caption{A proper 4-coloring for $r=\frac{2-\sqrt{2}}{2\sqrt{2}}$}
\label{fig:F4}
\end{figure}

Kruskal leaves open the question of whether a closed disk of radius greater than $\frac{1}{\sqrt{2}}$ can be properly 4-colored.  One might think that an annulus with outer radius $\frac{1}{\sqrt{2}}+\varepsilon$ could be properly 4-colored---after all, it has a hole in the middle---but the present authors have been unable to construct such a coloring. The nature of the difficulty will be illuminated in the next section.

\section{Radial Colorings}

\begin{defn}
  Let $0<r<\frac{1}{2}$.  A coloring of $A_r$ is called \emph{radial} if there exists a sequence of radii $r_1,r_2,\ldots,r_n=r_1$ such that the sector strictly between $r_i$ and $r_{i+1}$ is colored with a single color.  Let $\chi^{\text{radial}}(A_r)$ denote the least number of colors needed to give a proper coloring of $A_r$ using radial colorings only.
\end{defn}
 The radial colorings are the ``nice" colorings mentioned at the end of Section \ref{intro}.  We emphasize that the boundaries between sectors can be colored arbitrarily, while the sectors between radial boundaries must be monochromatic.  See Figure \ref{fig:Fsector}.

 \begin{figure}[htb!]
\centering
\includegraphics[width=140pt]{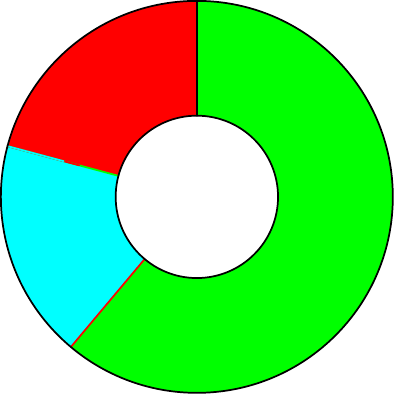}
\caption{A radial coloring}
\label{fig:Fsector}
\end{figure}

\begin{defn}
  A \emph{unit chord} in $A_r$ is a chord of unit length.  A \emph{unit sector} is a sector of minimal size that contains a unit chord.  See Figure \ref{fig:Fchord}.

\end{defn}

\begin{figure}[htb!]
\centering
\includegraphics[width=140pt]{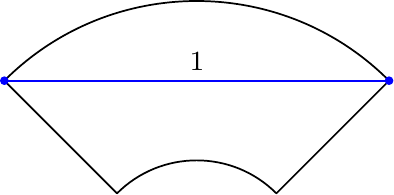}
\caption{A unit chord contained within a unit sector}
\label{fig:Fchord}
\end{figure}

The following lemma will be powerful enough to allow us to determine $\chi^{\text{radial}}(A_r)$ for all $0<r<\frac{1}{2}$.

\begin{lem}\label{mainlem}
  Let $0<r<\frac{1}{2}$ and let $A_r$ be given a proper radial coloring.  Then the interior of any color class is included in some unit sector.
\end{lem}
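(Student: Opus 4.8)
The plan is to recast the lemma in angular terms and then argue by contradiction, embedding a monochromatic unit-distance pair whenever a color class is ``too wide.'' First I would determine the angular width $\omega$ of a unit sector. Writing two points of $A_r$ in polar form with radii $\rho_1,\rho_2\in[\tfrac12-r,\tfrac12+r]$ and angular separation $\phi$, the law of cosines gives $1=\rho_1^2+\rho_2^2-2\rho_1\rho_2\cos\phi$ for a unit chord, so $\cos\phi=\frac{\rho_1^2+\rho_2^2-1}{2\rho_1\rho_2}$. A short optimization shows this is maximized (so $\phi$ is minimized) when $\rho_1=\rho_2=\tfrac12+r$, i.e. when both endpoints lie on the outer circle; hence a unit sector has angular width $\omega=2\arcsin\frac{1}{1+2r}$.

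Two consequences drive the argument. First, for any two points at angular separation $\phi$ the distance is at most $(1+2r)\sin(\phi/2)$, attained on the outer circle; since this is increasing and equals $1$ at $\phi=\omega$, any unit-distance pair has $\phi\ge\omega$, and, contrapositively, points with $\phi<\omega$ are strictly closer than $1$. Second, for every $\phi\in(\omega,\pi]$ there is a unit-distance pair at separation $\phi$ lying strictly inside $A_r$: take equal radii $\rho=\frac{1}{2\sin(\phi/2)}$, which lies in $(\tfrac12-r,\tfrac12+r)$ for such $\phi$. The first fact says separation controls distance; the second lets me convert a purely angular statement back into an actual monochromatic unit distance.

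With these in hand I would prove the lemma by contradiction. The interior $U$ of a color class is a union of open annular sectors, each spanning the full radial range; let $\Theta$ be its angular projection, a union of open arcs on the circle. The lemma is equivalent to the assertion that $\Theta$ is contained in an arc of length $\omega$. Suppose not. The heart of the proof is then to produce two angles $\theta,\theta'\in\Theta$ whose circular separation lies in $(\omega,\pi]$; granting this, the second fact supplies an interior unit-distance pair at angles $\theta,\theta'$, and since $U$ contains all interior radii at those angles (and $\Theta$ is open), both endpoints can be taken inside $U$. These two points are the same color and exactly unit distance apart, contradicting properness.

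The step I expect to be the main obstacle is exactly the production of that pair, i.e. showing that if $\Theta$ fails to lie in an arc of length $\omega$ then its circular diameter exceeds $\omega$. This is a genuinely global fact about arcs on a circle and not automatic: three clusters spaced at $2\pi/3$ have all pairwise separations equal to $2\pi/3$ yet cover no short arc, so the implication can fail once $\omega>2\pi/3$. Ruling this out uses the geometry of the relevant range of $r$ (where $\omega\le 2\pi/3$); concretely, one checks that if every pair of arcs in $\Theta$ had separation below $\omega$ then the gaps between consecutive arcs would sum to less than $2\pi$, which is impossible, so some pair is separated by more than $\omega$. A secondary technical point is the boundary case: a pair at separation exactly $\omega$ is realized only on the outer circle, which is not interior, so I would slide the pair to separation slightly larger than $\omega$---legitimate because $\Theta$ is open---landing strictly inside $A_r$, in the same spirit as the sliding arguments of Lemmas~\ref{rod} and~\ref{trirod}.
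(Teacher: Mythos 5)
Your route is genuinely different from the paper's, and in one respect cleaner. The paper argues by contradiction on a pair of same-colored radial segments whose farthest (outer) endpoints are more than $1$ apart: it slides a rod from the position $PA$ keeping one endpoint on $S_1$ and the other on the chord $AP'$ (which happens to lie at distance exactly $\tfrac{1}{2}$ from the center, hence inside the annulus), so that every radial segment at angular distance more than $\omega$ from $S_1$ contains a point at unit distance from $S_1$. Your Fact~2 --- an explicit unit-distance pair at common radius $\frac{1}{2\sin(\phi/2)}\in[\tfrac12,\tfrac12+r)$ for every separation $\phi\in(\omega,\pi]$ --- accomplishes the same thing more directly and disposes of the interior/boundary issue neatly. (Your Fact~1 is misstated: the bound $(1+2r)\sin(\phi/2)$ fails for small $\phi$, since two points at the same angle on the inner and outer circles are at distance $2r$; but the consequence you actually use, that a unit-distance pair has $\phi\ge\omega$, does follow correctly from your optimization of $\cos\phi$.) Both arguments, however, establish only the \emph{pairwise} statement that no two points of the color class sit at angular separation in $(\omega,\pi]$, and you are right that upgrading this to containment in a single arc of width $\omega$ is the crux; the paper's proof elides that step entirely, since its contradiction hypothesis negates the pairwise statement rather than the lemma's conclusion.

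Your treatment of that crux is where the gap lies. The sketch ``the gaps between consecutive arcs would sum to less than $2\pi$, which is impossible'' is not a contradiction --- the gaps always sum to $2\pi$ minus the measure of $\Theta$. The argument that does work when $\omega\le 2\pi/3$ is: fix $\theta_0\in\Theta$, so the pairwise bound puts $\Theta$ inside the arc $[\theta_0-\omega,\theta_0+\omega]$ of length $2\omega<2\pi$; linearize and set $a=\inf\Theta$, $b=\sup\Theta$; if $b-a>\pi$ the circular distance from $a$ to $b$ is $2\pi-(b-a)\ge 2\pi-2\omega\ge\omega$, producing a forbidden pair (openness of $\Theta$ handles the equality case), so $b-a\le\pi$ and hence $b-a\le\omega$. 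More seriously, your appeal to ``the relevant range of $r$'' is unfounded: $\omega\le 2\pi/3$ holds iff $r\ge\tfrac{2-\sqrt{3}}{2\sqrt{3}}$, whereas the lemma is asserted for all $0<r<\tfrac12$. For $r<\tfrac{2-\sqrt{3}}{2\sqrt{3}}$ your three-cluster obstruction is not hypothetical: take the standard three-sector coloring and recolor three thin sectors straddling the boundaries at $0$, $2\pi/3$, $4\pi/3$ with a fourth color; every pairwise angular separation within that class is at most $2\pi/3+2\varepsilon<\omega$, so it contains no unit distance, yet it spans an arc of length at least $4\pi/3>\omega$. Thus in that range the lemma as stated is false and no proof can close the gap; it must be restricted to $r\ge\tfrac{2-\sqrt{3}}{2\sqrt{3}}$ (the Theorem survives, since for smaller $r$ one has $N=3$ and the lower bound already follows from non-$2$-colorability).
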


\begin{proof}
  Call the intersection of a radial ray with $A_r$ a \emph{radial segment}.  See Figure \ref{fig:Fradseg}.

  \begin{figure}[htb!]
\centering
\includegraphics[width=140pt]{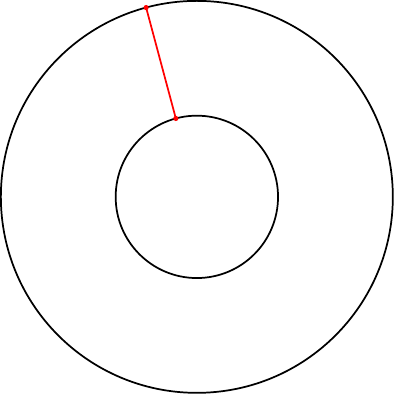}
\caption{A radial segment}
\label{fig:Fradseg}
\end{figure}

By way of contradiction, suppose there exist two radial segments of
the same  color (red, say) such that the chord joining their
farthest-apart points has length greater than 1 (Figure
\ref{fig:fgr1}).

\begin{figure}[htb!]
\centering
\includegraphics[width=140pt]{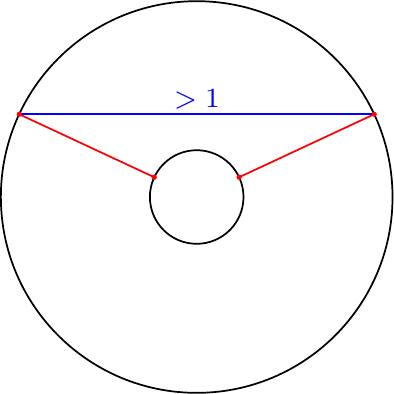}
\caption{Two radial segments}
\label{fig:fgr1}
\end{figure}

Call one of the segments $S_1$ (Figure \ref{fig:lemmafig}), and call
the point at which it intersects the outer circle $P$.  Call the
other segment $S_{2}$. Let $A$ and $B$ be the two points that are a
unit chord away from $P$ (Figure \ref{fig:AandB}).

\begin{figure}[htb!]
\centering
\includegraphics[width=140pt]{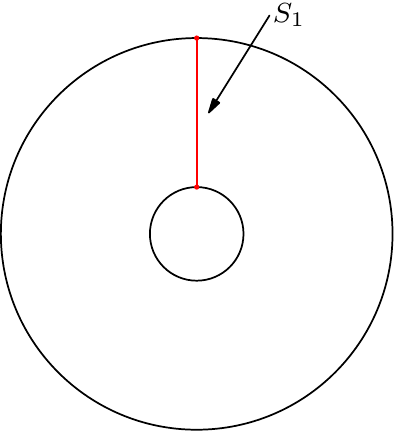}
\caption{The segment $S_{1}$}
\label{fig:lemmafig}
\end{figure}

\begin{figure}[htb!]
\centering
\includegraphics[width=140pt]{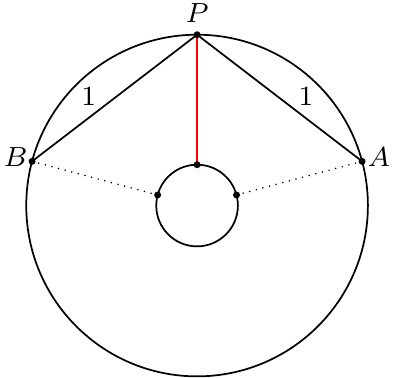}
\caption{Points $A$, $B$, and $P$}
\label{fig:AandB}
\end{figure}

 Let $P'$ be antipodal to $P$, and draw a segment from $A$ to $P'$ (Figure \ref{fig:AtoPprime}).

\begin{figure}[htb!]
\centering
\includegraphics[width=140pt]{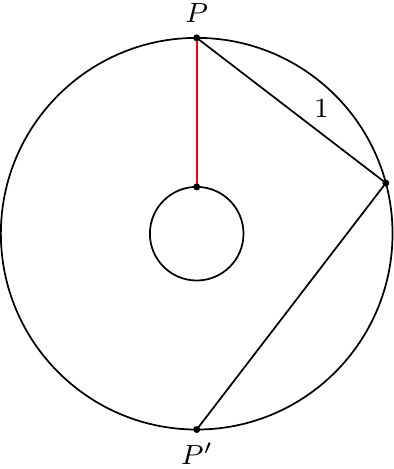}
\caption{The segment from $A$ to $P'$}
\label{fig:AtoPprime}
\end{figure}

  Now take the unit-length rod that goes from $P$ to $A$, and slide it continuously so that one endpoint stays on $S_1$ and the other stays on $AP'$ (Figure \ref{fig:segslide}).

\begin{figure}[htb!]
\centering
\includegraphics[width=140pt]{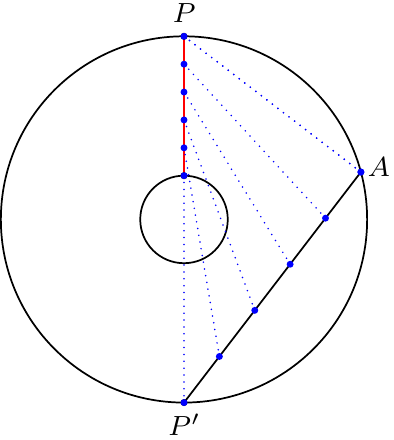}
\caption{Sliding the segment $AP$}
\label{fig:segslide}
\end{figure}

  Thus every radial segment between $A$ and $P'$ has a point on it that is unit distance from some point of $S_1$.  (Similarly for those between $P'$ and $B$.)  But this is a contradiction, since some point on $S_{2}$ must be on $AP'$ or $BP'$.
\end{proof}

\begin{thm}
  Let $0<r<\frac{1}{2}$.  Then $\chi^{\text{radial}}(A_r)$ is the least integer $N$ such that one can ``walk around" the outside circumference using $N$ unit-chord steps, i.e., $N=\lceil\frac{2\pi}{\theta} \rceil$, where $\theta=\arccos\left (1-\dfrac{1}{2(\frac{1}{2}+r)^2}\right)$.
\end{thm}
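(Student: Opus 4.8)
The plan is to prove both inequalities $\chi^{\text{radial}}(A_r)\ge N$ and $\chi^{\text{radial}}(A_r)\le N$, where $N=\lceil 2\pi/\theta\rceil$ and $R:=\tfrac12+r$ is the outer radius. The first step is a purely geometric observation that converts the notion of a ``unit sector'' into the explicit number $\theta$: by the law of cosines a chord joining two annulus points at radii $\rho_1,\rho_2$ with angular separation $\alpha$ has squared length $\rho_1^2+\rho_2^2-2\rho_1\rho_2\cos\alpha$, so a \emph{unit} chord satisfies $\cos\alpha=(\rho_1^2+\rho_2^2-1)/(2\rho_1\rho_2)$. I would check that this expression is increasing in each $\rho_i$ on $[\tfrac12-r,\tfrac12+r]$, so it is maximized, and hence $\alpha$ minimized, at $\rho_1=\rho_2=R$, giving $\cos\alpha=1-\tfrac{1}{2R^2}=\cos\theta$. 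Thus every unit chord subtends a central angle at least $\theta$, with equality exactly for a unit chord lying on the outer circle; equivalently, the unit sector has angular width $\theta$, and one notes $\theta\in(\tfrac{\pi}{3},\pi)$.

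For the lower bound I would reduce a radial coloring to a coloring of directions. Since every sector is monochromatic, the coloring induces a map $c$ from the circle of directions $[0,2\pi)$ to the color set that is constant on each sector; let $\Phi_i$ be the set of directions receiving color $i$. If two directions $\phi,\phi'$ of the same color were at angular distance greater than $\theta$, the outermost points of the two corresponding radial segments would be a chord of length $2R\sin(|\phi-\phi'|/2)>2R\sin(\theta/2)=1$ apart (using that $2R\sin(\theta/2)=1$), contradicting Lemma \ref{mainlem}. Hence each $\Phi_i$ has diameter at most $\theta$, and since $\theta<\pi$ it lies in a genuine arc of length at most $\theta$, so its measure is at most $\theta$. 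As the $\Phi_i$ partition $[0,2\pi)$ up to finitely many boundary directions, a $k$-coloring gives $2\pi=\sum_i|\Phi_i|\le k\theta$, whence $k\ge 2\pi/\theta$ and therefore $k\ge\lceil 2\pi/\theta\rceil=N$.

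For the upper bound I would exhibit a coloring meeting this count. Cut $A_r$ into $N$ congruent sectors $S_0,\dots,S_{N-1}$ of angular width $2\pi/N$, assign to each boundary ray the color of the sector clockwise from it, and color $S_j$ with color $j$. Because $N\ge 2\pi/\theta$ we have $2\pi/N\le\theta$, so no sector is wide enough to contain a unit chord. More precisely, any unit chord subtends a central angle $s$ with $\theta\le s\le\pi$, and since $2\pi/N\le\theta$ and $\pi\le(N-1)\tfrac{2\pi}{N}$ for $N\ge 2$, its two endpoints lie in sectors whose indices differ by between $1$ and $N-1$ modulo $N$; in particular they lie in distinct sectors and receive distinct colors. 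Hence the coloring is proper and uses $N$ colors, giving $\chi^{\text{radial}}(A_r)\le N$ and, with the lower bound, equality.

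I expect the main obstacle to be the geometric optimization in the first step, namely verifying that pushing a unit chord out to the outer circle genuinely minimizes its angular footprint, so that the combinatorial quantity $\lceil 2\pi/\theta\rceil$ is exactly the threshold controlled by Lemma \ref{mainlem}. The remaining bookkeeping, such as handling the measure-zero sector boundaries in the partition and confirming there is no ``wrap-around'' coincidence of sectors in the construction, is routine once $\theta$ has been pinned down simultaneously as the minimal chord angle and as the unit-sector width.
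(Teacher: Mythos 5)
Your overall architecture matches the paper's own (very terse) proof: the lower bound is extracted from Lemma \ref{mainlem} by a covering count on arcs of width $\theta$, and the upper bound is an explicit sector coloring (the paper uses $N-1$ unit sectors plus one leftover sector, you use $N$ equal half-open sectors of width $2\pi/N\le\theta$ --- an immaterial difference). Your opening computation, showing via monotonicity of $(\rho_1^2+\rho_2^2-1)/(2\rho_1\rho_2)$ that every unit chord subtends a central angle at least $\theta$ and that the unit sector has width exactly $\theta$, is correct and is a detail the paper omits but genuinely needs to make the formula for $N$ meaningful.

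There is, however, one step in your lower bound that is false as written: from ``each $\Phi_i$ has diameter at most $\theta$'' you infer that ``since $\theta<\pi$ it lies in a genuine arc of length at most $\theta$.'' On a circle, bounded geodesic diameter does not imply containment in an arc of the same length once the bound reaches $2\pi/3$: the three directions $0$, $2\pi/3$, $4\pi/3$ have pairwise distance $2\pi/3$, yet the shortest arc containing them has length $4\pi/3$. Since $\theta\ge 2\pi/3$ exactly when $r\le\frac{2-\sqrt{3}}{2\sqrt{3}}$, your inference breaks down precisely in the regime where $N=3$. The repair is immediate: do not weaken Lemma \ref{mainlem} to the pairwise-distance consequence. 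The lemma states outright that the interior of each color class is contained in a single unit sector, i.e., in an arc of directions of length $\theta$; citing that directly gives $|\Phi_i|\le\theta$ (up to the finitely many boundary directions) and your count $2\pi\le k\theta$ goes through. (The measure bound does in fact follow from the diameter bound alone, but only via a sumset inequality such as Raikov's theorem $|S-S|\ge\min(2|S|,2\pi)$, which is far more machinery than you need.) With that one substitution the argument is complete and correct, including the identity $2R\sin(\theta/2)=1$ and the propriety of the $N$-sector coloring.
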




\begin{proof}
  That $\chi^{\text{radial}}(A_r)\geq N$ follows immediately from the previous lemma.  It is easy to color $A_r$ in $N$ colors using $N-1$ unit sectors as different colors and one ``leftover" sector in the $N^\text{th}$ color.
\end{proof}

Below is a table of values for $\chi^\text{radial}(A_r)$.\\
\begin{table}
\begin{center}
\begin{tabular}{|c|c|}
\hline
$0<r\leq\frac{2-\sqrt{3}}{2\sqrt{3}}$ & $\chi^\text{radial}=3$\\
\hline
$\frac{2-\sqrt{3}}{2\sqrt{3}}<r\leq \frac{2-\sqrt{2}}{2\sqrt{2}}$ & $\chi^\text{radial}=4$ \\
\hline
$\frac{2-\sqrt{2}}{2\sqrt{2}}<r\leq\frac{-1}{2}+\sqrt{\frac{2}{5-\sqrt{5}}}$ & $\chi^\text{radial}=5$\\
\hline
$\frac{-1}{2}+\sqrt{\frac{2}{5-\sqrt{5}}} <r<\frac{1}{2}$ & $\chi^\text{radial}=6$\\
\hline
\end{tabular}
\end{center}
\caption{A table of values for $\chi^\text{radial}(A_r)$}
\end{table}

We note that if we let $r=1/2 $, then we get a disk of radius 1, which can be given a proper radial  coloring by coloring the 6 unit sectors in distinct colors and the center of the disk in the 7th color.

\section{Further Directions}

It may be interesting to consider other connected but not simply
connected regions of the plane.   For instance, consider the
well-known periodic hexagonal coloring of the plane in 7 colors.
Take one color class, enlarge it slightly and delete it, leaving a
``holey plane."  Certainly, this holey plane can be colored in 6
colors.  Can it be colored in 5?  Townsend's result on colorings
using tiles would seem not to apply.

Bounded and not simply connected subsets may also be interesting to consider, although it would seem that their analysis promises little additional insight into the larger problem of understanding plane colorings and determining the chromatic number of the plane.  After all, the authors were not able to find colorings of $A_r$ that improve Kruskal's corresponding bounds for circular regions; the ``donut hole" did not seem to be useful for arbitrary colorings.

A more promising approach might be to try to prove (or find a counterexample for) a Moser-spindle version of Kruskal's tri-rod lemma.  If $r>\frac{3}{\sqrt{11}} - \frac{1}{2}$, then a spindle can be embedded into the interior of $A_r$ (see Figure \ref{fig:moserfig}).  If a tri-rod can be embedded, then $A_r$ is not 3-colorable.  Perhaps if a spindle can be embedded, $A_r$ is not 4-colorable.

\begin{figure}[htb!]
\begin{center}
\includegraphics[width=200pt]{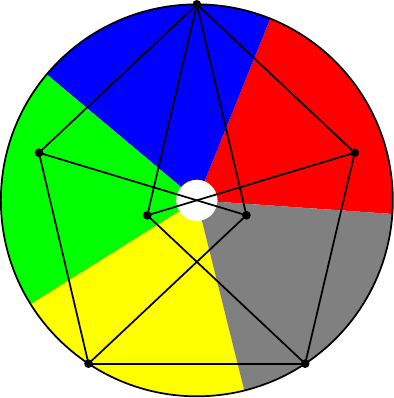}
\caption{The Moser spindle can be embedded into $A_{r}$ if $r > \dfrac{3}{\sqrt{11}} - \dfrac{1}{2}$. A radial coloring of $A_{r}$ requires 5 colors in this case.}
\label{fig:moserfig}
\end{center}
\end{figure}

\clearpage


\begin{thebibliography}{1}

\bibitem{Axenovichetal}
M.~Axenovich, M.~Lastrina, J.~Choi, B.~Stanton, J.~Smith, and T.~McKay.
\newblock On the chromatic number of subsets of the {E}uclidean plane.
\newblock {\em Graphs and Combinatorics}, 30:71--81, January 2014.

\bibitem{Coulson04}
D.~Coulson.
\newblock On the chromatic number of plane tilings.
\newblock {\em J. Aust. Math. Soc.}, 77(2):191--196, 2004.

\bibitem{Falconer81}
K.~J. Falconer.
\newblock The realization of distances in measurable subsets covering {${\bf
  R}^{n}$}.
\newblock {\em J. Combin. Theory Ser. A}, 31(2):184--189, 1981.

\bibitem{Kruskal08}
C.~P. Kruskal.
\newblock The chromatic number of the plane: the bounded case.
\newblock {\em J. Comput. System Sci.}, 74(4):598--627, 2008.

\bibitem{Soifer}
A.~Soifer.
\newblock {\em The mathematical coloring book}.
\newblock Springer, New York, 2009.
\newblock Mathematics of coloring and the colorful life of its creators, With
  forewords by Branko Gr{\"u}nbaum, Peter D. Johnson, Jr. and Cecil Rousseau.

\bibitem{Townsend81}
S.~P. Townsend.
\newblock Every {$5$}-coloured map in the plane contains a monochrome unit.
\newblock {\em J. Combin. Theory Ser. A}, 30(1):114--115, 1981.


\bibitem{Townsend2005}
S.~P. Townsend.
\newblock Colouring the plane with no monochrome units.
\newblock {\em Geombinatorics}, 14(4):184--193, 2005.




\end{thebibliography}
\end{document}